\documentclass{amsart}
\usepackage{amsthm, amssymb, amsmath, amsfonts}
\usepackage[affil-it]{authblk}
\usepackage{blindtext}
\usepackage{dsfont}
\usepackage{graphicx}
\usepackage[utf8]{inputenc}
\usepackage[english]{babel}
\usepackage[foot]{amsaddr}
\usepackage{multirow}
\usepackage[backend=biber,
style=alphabetic,
sorting=ynt
]{biblatex}
\usepackage[outdir=./]{epstopdf}
\usepackage[margin=1in]{geometry}
\usepackage{verbatim}

\usepackage{tabularx,ragged2e,booktabs,caption}
\newcolumntype{C}[1]{>{\Centering}m{#1}}

\usepackage{setspace}
%\doublespacing

\newcommand{\bb}{\mathbb}
\newcommand{\floor}[1]{\left\lfloor #1 \right\rfloor}

%\input{arxiv.bbl}
%\addbibresource{citations.bib}
\bibliography{arxiv}

\newtheorem{theorem}{Theorem}[section]

\newtheorem{lemma}{Lemma}[theorem]
\newtheorem{prop}{Proposition}[section]

\theoremstyle{definition}
\newtheorem{definition}{Definition}[section]

\normalsize
\DeclareGraphicsExtensions{.pdf, .png, .jpg}

\title{Generalized Algorithm for Wythoff's Game with Basis Vector $(2^b,2^b)$}

\author{Shubham Aggarwal, Jared Geller, Shuvom Sadhuka, Max Yu}
\address{PROMYS 2016}
\date{January 2017}
%\affil{PROMYS, under guidance of Prof. Paul Gunnells and Tim Ratigan}

\begin{document}

\maketitle
\begin{abstract}

Wythoff's Game is a variation of Nim in which players may take an equal number of stones from each pile or make valid Nim moves. W. A. Wythoff proved that the set of P-Positions (losing position), $C$, for Wythoff's Game is given by $C := \left\{ (\lfloor k\phi \rfloor, \lfloor k\phi^2 \rfloor), (\lfloor k\phi^2 \rfloor, \lfloor k\phi \rfloor) : k \in \mathbb Z_{\geq 0} \right\}$ \cite{original}. An open Wythoff problem remains where players make the valid Nim moves or remove $kb$ stones from each pile, where $b$ is a fixed integer. We denote this as the $(b,b)$ game. For example, regular Wythoff's Game is just the $(1,1)$ game. In 2009, Duch{\^e}ne and Gravier \cite{Gravier} proved an algorithm to generate the set of P-Positions for the $(2,2)$ game by exploiting the periodic nature of the differences of stones between the two piles modulo $4$. We observe similar cyclic behaviour (see definition \ref{Cyclic}) for any $b$, where $b$ is a power of $2$, modulo $b^2$, and construct an algorithm to generate the set of P-Positions for this game. Let $a$ be a power of $2$. We prove our algorithm works by first showing that it holds for the first $a^2$ terms in the $(a,a)$ game. Next, we construct an ordered multiset for the $(2a,2a)$ game from the $a^2$ terms, and an inductive proof follows. Moreover, we conjecture that all cyclic games require $a$ to be a power of $2$, suggesting that there is no similar structure in the generalised $(b,b)$ game where $b$ isn't a power of $2$. Future directions for generalising this result would likely utilise numeration systems, particularly the PV numbers.
%We explore variations of Wythoff's Game with different moves formed by the basis ${(1,0),(0,1),(a,a)}$ for $a\in \mathbb{N}$.  Specifically, when $a = 2^b$, $b \in \mathbb{Z}^+$, we find that the P-Positions $C = (x_i, y_i)$ have the property $x_i - y_i \equiv x_{i-ka^2} - y_{i-ka^2} \pmod{a^2}$. Moreover, we generalize an algorithm presented by Duchene and Gravier \cite{Gravier} to find P-positions for the $(2,2)$ game to any game with the property described above. 

\end{abstract}

%\tableofcontents 

\section{Introduction}
Many variations of Wythoff's Game have been explored where different possible moves are permissible. For example, in 2009, Duchene and Gravier solved the Wythoff's Game variation formed by the basis $\left\{(1,0),(0,1),(2,2)\right\}$  \cite{Gravier}. We generalize this result to all \textit{cyclic} games formed by the basis $\left\{(1,0),(0,1),(a,a)\right\}$. In particular, this solves all Wythoff's Game variations where $a=2^i$ for $i\in \mathbb{N}.$ We cannot find another example of a cyclic game, however, and conjecture that no such game exists.

\begin{definition}{(Wythoff's Game)}
A two-player game played with two piles of stones, with $n$ and $m$ stones in each pile respectively, $n, m \in \mathbb{N}$. The players alternate turns. On any given turn, a player may remove $a$ stones from one pile \textit{or} $b$ stones from the other pile \textit{or} an equal number of stones simultaneously from both piles, $a,b\in \mathbb{N}$. The last player to remove a stone wins. We assume throughout this paper that the two players play with optimal strategy.
\end{definition}
\begin{definition}{(N-Position/Hot Position)}
A position in which whichever player's turn it is will win with optimal strategy. For example, $(2,2)$ is a N-position in Wythoff's Game because the player may remove 2 stones from each pile and win. We use the terms \textit{N-Position} and \textit{hot position} interchangeably. 
\end{definition}

\begin{definition}{(P-Position/Cold Position)}
A position in which the player cannot win with optimal strategy. Equivalently, a P-position is any position in which all possible moves will move the game to a N-Position. For example, $(1,2)$ is a cold position since any move will allow the other player to win. We use the terms \textit{P-Position} and \textit{cold position} interchangeably. 
\end{definition}

Plotting the P-positions on $\mathbb{N}^2$ with the x-axis and y-axis each representing a single pile of stones produces an interesting pattern. The P-Positions in the original Wythoff Game cluster around lines with slope $\phi$ and $\frac{1}{\phi}.$ We generalize this notion to $\mathbb{N}^3$ later, with the z-axis representing a third pile of stones. 

\begin{figure}[!ht]
  \centering
      \includegraphics[width=0.5\textwidth]{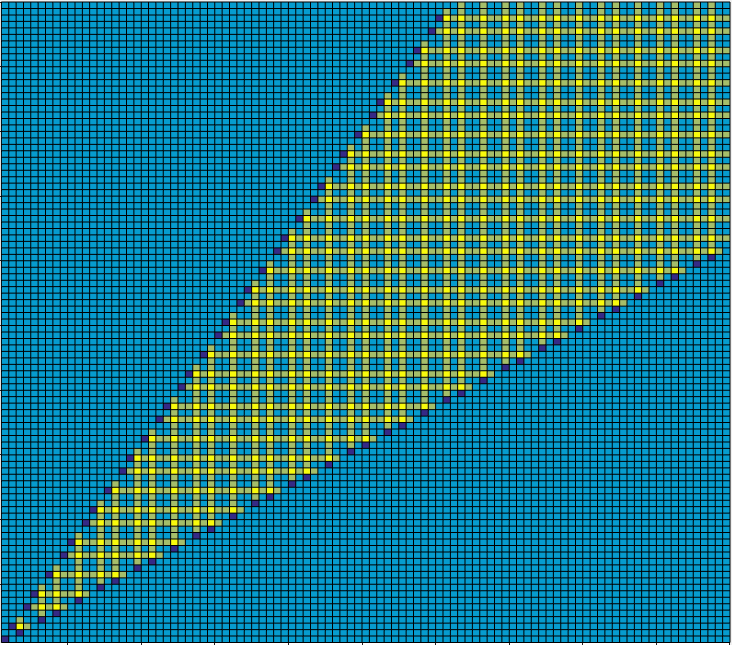}
  \caption{A plot of Wythoff's Game with each axis representing a pile of stones. The P-Positions are denoted in dark blue, while the N-positions are denoted in light blue and yellow. The yellow points show states of the game where there are multiple ways to reach a P-Position.}
\end{figure}

\begin{definition}{$(a,b)$}
     represents the state of the game when pile \textit{A} has \textit{a} stones and pile \textit{B} has \textit{b} stones.
\end{definition}
\begin{definition}{$x(a_1,b_1)$}
 represents the move which changes the state of the game from $(a,b)$ to $(a-xa_1,b-xb_1)$
\end{definition}

\begin{definition}{(Move Vector)}
A game is said to have a \textit{move vector} $(a,b)$ if it is permissible to change the state of the game by the move $\gamma(a,b)$ for some integer $\gamma$.
\end{definition}

\noindent Let $C \subset \mathbb N^2$ be the set of all P-positions for the classical Wythoff's Game. Notice that:

\begin{itemize}
    \item $(a,b)$ and $(b,a)$ represent the same state in the game with just the piles switched. So without loss of generality, we allow $a \le b$.
    \item If $(a,b) \in C$, then for all $i \in \mathbb Z_{\ne 0}, (a+i,b), (a,b+i), (a+i,b+i)\notin C,$ as  playing moves $i(1,0),i(0,1),$ and $i(1,1)$ at each position respectively would move the game to (a,b) which is a cold position. It follows from the definition of a cold position that one cannot get to any other cold position by playing a valid move, a contradiction.
\end{itemize}

\section{Classical Wythoff's Game} 
We now provide the classic result of Wythoff's Game, given first by W.A Wythoff in $1907$ \cite{original}.

\begin{lemma} \label{wyt}
In Wythoff's Game, where the set of moves are generated by the move vectors $\left\{(1,0),(0,1),(1,1)\right\}$, the set of cold positions is given by the following, where $\phi$ is the golden ratio $\frac{1+\sqrt{5}}{2}$: \[\left\{ (\lfloor k\phi \rfloor, \lfloor k\phi^2 \rfloor), (\lfloor k\phi^2 \rfloor, \lfloor k\phi \rfloor): k \in \mathbb Z_{\ge 0} \right\}\]
\end{lemma}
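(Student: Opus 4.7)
The plan is to invoke Beatty's theorem together with the identity $\lfloor k\phi^2\rfloor - \lfloor k\phi\rfloor = k$ (which follows from $\phi^2 = \phi+1$), and then directly verify that the candidate set $S := \{(A_k, B_k), (B_k, A_k) : k \geq 0\}$, with $A_k := \lfloor k\phi\rfloor$ and $B_k := \lfloor k\phi^2\rfloor$, coincides with the set of P-positions. The two structural facts doing all the work are: (i) since $\phi$ is irrational and $1/\phi + 1/\phi^2 = 1$, Beatty's theorem tells us $\{A_k\}_{k \geq 1}$ and $\{B_k\}_{k \geq 1}$ partition $\mathbb{Z}_{\geq 1}$, so every positive integer is \emph{either} some $A_k$ \emph{or} some $B_k$ but not both; (ii) $B_k - A_k = k$ for every $k \geq 0$. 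No further number-theoretic input is needed.

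Next, I would recall the well-known inductive characterization of cold positions: a set $T$ equals the set of P-positions if and only if (1) from any element of $T$, every legal move leaves $T$, and (2) from any position outside $T$, some legal move enters $T$. The strategy is to verify (1) and (2) for $S$.

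For (1), suppose $(A_k, B_k) \in S$ and some move $i(1,0)$, $i(0,1)$, or $i(1,1)$ produces another element of $S$, of the form $(A_j, B_j)$ or $(B_j, A_j)$. In every sub-case one of the two structural facts is violated: horizontal or vertical moves force two distinct elements of $S$ to share a coordinate, contradicting the Beatty partition (no $A$-value equals any $B$-value, and the sequences $A_k$ and $B_k$ are each strictly increasing); diagonal moves preserve the difference of coordinates, contradicting $B_k - A_k = k$ since different indices give different differences. The symmetric positions $(B_k, A_k)$ are handled identically.

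For (2), take $(m,n) \notin S$ with $m \le n$. Exactly one of $m = A_k$ or $m = B_k$ holds. If $m = A_k$, then $n \ne B_k$: when $n > B_k$, a vertical move brings the position to $(A_k, B_k) \in S$; when $n < B_k$, we have $\ell := n - m < B_k - A_k = k$, and using $B_\ell - A_\ell = \ell = n - m$, the diagonal move $(m-A_\ell)(1,1)$ lands on $(A_\ell, B_\ell) \in S$, with both decrements positive. If instead $m = B_k$ (so $k \ge 1$), then $n \ge m > A_k$, and the vertical move reducing the second pile to $A_k$ reaches $(B_k, A_k) \in S$. This exhausts all cases. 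The only delicate point is the sub-case $n < B_k$: I need to check that $A_\ell \le m$ and that the diagonal decrement $m - A_\ell$ is strictly positive, both of which follow from $\ell < k$ and the strict monotonicity of $A$. Everything else is routine bookkeeping driven by Beatty's theorem and the identity $B_k = A_k + k$.
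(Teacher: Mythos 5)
Your proposal is correct and rests on the same two facts as the paper's own argument: the Beatty complementarity of the sequences $\lfloor k\phi\rfloor$ and $\lfloor k\phi^2\rfloor$, and the identity $\lfloor k\phi^2\rfloor - \lfloor k\phi\rfloor = k$. The differences are organizational rather than mathematical: you cite Beatty's theorem where the paper reproves the disjointness and covering of the two sequences by hand, and you verify the full two-sided characterization of P-positions (no move stays in the set, and every outside position has a move into the set) within the lemma itself, whereas the paper's lemma establishes only the partition and the stability half, deferring the reachability case analysis to the theorem that follows.
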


\begin{proof}
It suffices to show that every integer can be expressed as either $\floor{k\phi}$ or $\floor{\ell\phi^2}$ for integers $k,\ell$, and furthermore no two cold positions differ by the move $\gamma(1,1)$ for all $\gamma \in \mathbb{Z}$.
We first show that, for all $k,\ell \in \mathbb{Z}$, $\floor{k\phi} \neq \floor{\ell\phi^2}.$ We proceed by cases. Consider the two cases: $k\phi < k + \ell$ and $k\phi >  k+\ell$.
\begin{description}
%%{\setlength{\parindent}{0cm}
    
   \item[Case 1: $k\phi < k + \ell$]
\begin{align*}
    k\phi &< k+\ell \\
    k\phi^2 &< k\phi + \ell\phi \\
    k\phi + k &< k\phi + \ell\phi \\
    k  &< \ell\phi \\
    k+\ell &< \ell\phi+\ell \\
    k+\ell &< \ell(\phi+1) \\
    k+\ell &< \ell\phi^2 \\
\end{align*}
\begin{equation}
k\phi<k+\ell<\ell\phi^2. \label{case1}
\end{equation}

   \item[Case 2: $k\phi > k+\ell$]
\begin{align*}
    k\phi &>  k+\ell \\
    k\phi^2 &>  (k+\ell)\phi \\
    k\phi + k &>  k\phi + \ell\phi \\
    k\phi + k + \ell &>  k\phi + \ell\phi + \ell \\
    k + \ell &>  \ell(\phi + 1) \\
    k + \ell &>  \ell\phi^2
\end{align*}
\begin{equation}
\ell\phi^2 < k+\ell<k\phi \label{case2}
\end{equation}

\end{description}
Because $k+\ell \in \mathbb{N}$, we know that $\floor{k\phi} \neq \floor{l\phi^2}.$ Now suppose $\lfloor k\phi \rfloor + x=\lfloor l\phi \rfloor$ and $\lfloor k\phi^2 \rfloor +x=\lfloor l\phi^2 \rfloor$ for some $x \in \mathbb{Z}.$ Then we have the following:
\begin{align*}
\floor{k\phi^2} - \floor{k\phi} &= \floor{\ell\phi^2} - \floor{\ell\phi} \\
\floor{k\phi +k} -\floor{k\phi} &= \floor{\ell\phi +\ell} - \floor{\ell\phi} \\
\floor{k\phi} +k -\floor{k\phi} &= \floor{\ell\phi} +\ell -\floor{\ell\phi} \\
k &= \ell
\end{align*}

Thus, the move $\gamma(1,1)$ is only permissible for $\gamma=0,$ giving back the same game state. Let $A=\left\{ \lfloor k\phi \rfloor: k \in \mathbb N \right\}, B=\left\{ \lfloor k\phi^2 \rfloor: k \in \mathbb N \right\}.$ It follows that $A\cap B= \emptyset$ and  $A\cup B= \mathbb N$. \\ 

Suppose for sake of contradiction $A\cup B= \mathbb N - S$, for some $S \subset \mathbb N, S\ne \emptyset.$ Let $n\in S.$ We have $ \lfloor k\phi \rfloor \ne n, $ and $ \lfloor l\phi \rfloor \ne n,$ for all $ k\in \mathbb N.$ Let $l\in \mathbb{Z}$ such that $k\phi<n$ and $l\phi^2 < n.$ Then

\[(k+1)\phi>n+1\] and 
\[(\ell+1)\phi^2 <n+1\]
But because $k+l$ is always strictly between $k\phi$ and $\ell\phi^2,$ (see Equations \ref{case1} and \ref{case2}) and $k\phi, \ell\phi^2<n,$
\begin{equation} \label{test}
k+\ell<n
\end{equation}
Similarly, because $(k+1)+(\ell+1)$ lies strictly between $(k+1)\phi$ and $(\ell+1)\phi^2.$\\

\[(\ell+1)\phi^2>n+1\]
\[k+\ell+2>n+1\]
\[k+\ell+1>n\]
\[k+\ell < n < k+\ell+1\]

Since $k+\ell,k+\ell+1$ are consecutive integers, there does not exist an $n\in\bb N$ strictly between them, a contradiction. Hence, because $A\cap B= \emptyset,$ and $S=\emptyset,$ we have $A\cup B= \mathbb N$.

\end{proof}

\begin{theorem}
(Wythoff) For the $k$th cold position $(n_k,m_k)$, with $n \leq m,$ we have:
\begin{align*}
    n_k &= \lfloor k\phi \rfloor = \lfloor m_k\phi \rfloor - m_k \\
    m_k &= \lfloor k\phi^2 \rfloor = \lceil n_k\phi \rceil = n_k + k
\end{align*}
\end{theorem}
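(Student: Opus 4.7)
The plan is to establish the four remaining equalities one at a time, leveraging Lemma \ref{wyt} which already identifies the $k$th cold position as $(n_k,m_k)=(\lfloor k\phi\rfloor,\lfloor k\phi^2\rfloor)$. So the work reduces to the three identities $m_k=n_k+k$, $m_k=\lceil n_k\phi\rceil$, and $\lfloor m_k\phi\rfloor-m_k=n_k$. All three will follow from the defining relation $\phi^2=\phi+1$ (equivalently $1/\phi=\phi-1$) together with careful bookkeeping of fractional parts.

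I would begin with the identity $m_k=n_k+k$, which is essentially immediate: since $\phi^2=\phi+1$, we have $k\phi^2=k\phi+k$, and because $k\in\mathbb{Z}$ the floor function passes through the integer term, giving $\lfloor k\phi^2\rfloor=\lfloor k\phi\rfloor+k$. Next I would handle $m_k=\lceil n_k\phi\rceil$. Writing $n_k=k\phi-\{k\phi\}$ and multiplying by $\phi$ yields
\[
n_k\phi=k\phi^2-\phi\{k\phi\}=k+n_k+\{k\phi\}-\phi\{k\phi\}=k+n_k-(\phi-1)\{k\phi\}.
\]
Since $\phi$ is irrational and $k\ge 1$, the quantity $(\phi-1)\{k\phi\}$ lies strictly in $(0,\phi-1)\subset(0,1)$, so $n_k\phi$ sits strictly between $k+n_k-1$ and $k+n_k$, and the ceiling equals $k+n_k=m_k$ as desired.

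The most delicate of the three is the identity $\lfloor m_k\phi\rfloor-m_k=n_k$, and I expect this to be the main obstacle because it requires two layers of fractional-part analysis rather than one. The approach I would take is to first simplify
\[
\lfloor m_k\phi\rfloor-m_k=\lfloor m_k(\phi-1)\rfloor=\lfloor m_k/\phi\rfloor,
\]
using that $m_k$ is an integer and $\phi-1=1/\phi$. Now write $m_k=k\phi^2-\{k\phi^2\}$ so that $m_k/\phi=k\phi-\{k\phi^2\}/\phi$. Crucially, since $k\in\mathbb{Z}$ one has $\{k\phi^2\}=\{k\phi+k\}=\{k\phi\}$, so
\[
m_k/\phi=k\phi-\{k\phi\}/\phi=n_k+\{k\phi\}-\{k\phi\}/\phi=n_k+\{k\phi\}/\phi^2,
\]
using $1-1/\phi=1/\phi^2$. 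Since $\{k\phi\}/\phi^2\in[0,1/\phi^2)\subset[0,1)$, the floor collapses to $n_k$, completing the proof.

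Overall the argument is a sequence of algebraic manipulations driven entirely by $\phi^2=\phi+1$; the only subtle point is recognizing the fractional-part identity $\{k\phi^2\}=\{k\phi\}$ that makes the final floor computation land exactly on $n_k$ rather than $n_k-1$.
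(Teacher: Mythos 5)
Your proposal is correct, but it proves a different half of the theorem than the paper does. The paper's own proof ignores the auxiliary identities entirely: it checks the case $k=1$, cites Lemma \ref{wyt} to get that every pair $(\lfloor k\phi\rfloor,\lfloor k\phi^2\rfloor)$ is cold, and then argues by contradiction that no cold position lies outside the formula (any candidate $(a,b)$ has $a=\lfloor k\phi\rfloor$ or $a=\lfloor k\phi^2\rfloor$ for some $k$, so a legal move sends it to a known cold position). In other words, the paper re-establishes the exhaustiveness of the set characterization and never verifies $m_k=n_k+k$, $m_k=\lceil n_k\phi\rceil$, or $n_k=\lfloor m_k\phi\rfloor-m_k$. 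You do the opposite: you take the set identification from Lemma \ref{wyt} as given and derive exactly those three identities by tracking fractional parts through $\phi^2=\phi+1$. All three of your computations check out -- the passage of the integer $k$ through the floor, the bound $(\phi-1)\{k\phi\}\in(0,1)$ for the ceiling, and the identities $\{k\phi^2\}=\{k\phi\}$ and $1-1/\phi=1/\phi^2$ for the final floor -- so your argument supplies precisely the content of the theorem statement that the paper's proof leaves unproved, while the paper's proof supplies the indexing/exhaustiveness claim that you delegate to the lemma. A fully self-contained proof of the theorem as stated really needs both halves; the one small point you should make explicit is that the $k$th cold position (in increasing order of smaller coordinate) corresponds to parameter $k$, which follows from the strict monotonicity of $k\mapsto\lfloor k\phi\rfloor$ together with the lemma's statement that the two Beatty sequences partition $\mathbb{N}$.
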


\begin{proof}
For $k=1,$ it is simple to check that $(\lfloor k\phi \rfloor, \lfloor k\phi^2 \rfloor)=(2,1)$ and $(\lfloor k\phi^2 \rfloor, \lfloor k\phi \rfloor)=(1,2)$ are indeed cold positions. From Lemma \ref{wyt}, $(\lfloor k\phi \rfloor, \lfloor k\phi^2 \rfloor)$ and $(\lfloor k\phi^2 \rfloor, \lfloor k\phi \rfloor)$ are cold positions for all $k\in \mathbb N.$ Assume for sake of contradiction there exists $a,b \in \mathbb{N}, (a,b)\in C$ that is not generated by the above formula. By Lemma \ref{wyt}, $\lfloor k\phi \rfloor=a$ or $\lfloor k\phi^2 \rfloor=a$ for some $k\in \mathbb N.$ We proceed by cases. 
\begin{description}
    \item[Case 1] $\lfloor k\phi \rfloor=a$

Playing move $(0,\lfloor k\phi^2 \rfloor-b)$ takes $(a,b)$ to $(\lfloor k\phi \rfloor, \lfloor k\phi^2 \rfloor)\in C.$ By the definition of a cold position, $(a,b)\notin C.$

    \item[Case 2] $\lfloor k\phi^2 \rfloor=a$

Playing move $(\lfloor k\phi \rfloor-b,0)$ takes $(a,b)$ to $(\lfloor k\phi^2 \rfloor, \lfloor k\phi \rfloor)\in C.$ By the definition of a cold position, $(a,b)\notin C.$
\end{description}

We have reached a contradiction, so no such cold position $(a,b)$ exists. Thus all cold position are given by $(\lfloor k\phi \rfloor, \lfloor k\phi^2 \rfloor)$ and $(\lfloor k\phi^2 \rfloor, \lfloor k\phi \rfloor),  k\in \mathbb N.$
\end{proof}
It is important to note that this proof strategy cannot be applied to other Wythoff's variations, because it is dependent on the property  $\phi^2=\phi+1,$ which is unique to $\phi.$

\section{Variations of Wythoff's Game}
We now introduce some definitions regarding generalizations of Wythoff's Game.
\begin{definition}{($(a,a)$ Game)}
Let the $(a,a)$ game be defined as the  Wythoff's Game variation formed by the basis vectors $\left \{(1,0),(0,1),(a,a) \right\}$. That is, a player may remove as many stones from one pile or the other, or $k$ stones from each, where $k$ is a positive multiple of $a$.
\end{definition}

Note that regular Wythoff's Game is just the $(1,1)$ game. Just as we can plot Wythoff's game on $\mathbb{N}^2$, plotting the P-Positions for the general $(a,a)$ game produces an interesting graph. Figure $2$ above gives the graphs for the $(1,1)$, $(3,3)$, and $(5,5)$ games.

\begin{figure} 
\includegraphics[width=5cm,height=5cm]{a=1.png}
\hspace{.5cm}
\includegraphics[width=5cm,height=5cm]{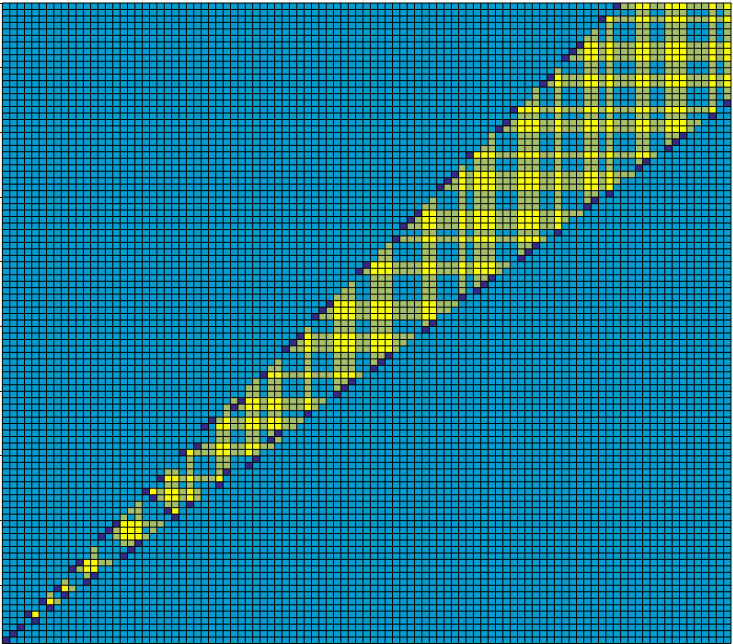}
\hspace{.5cm}
\includegraphics[width=5cm,height=5cm]{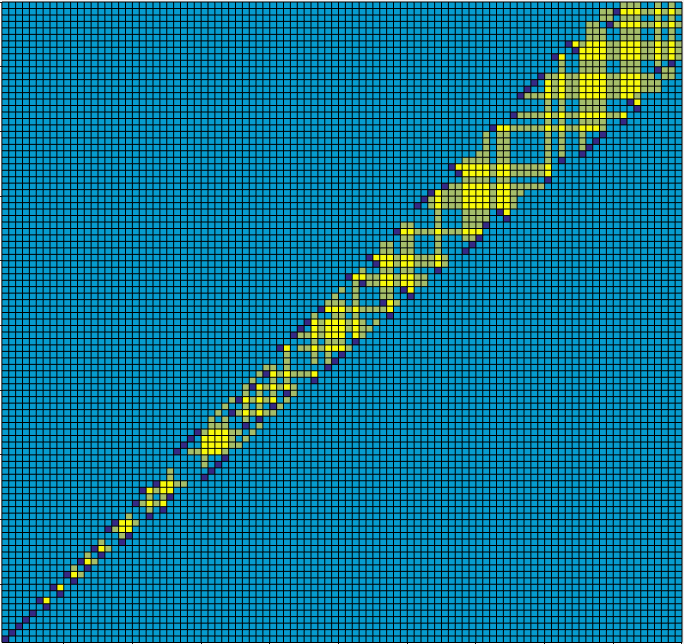}
\caption{A plot of the $(1,1)$, $(3,3)$, and $(5,5)$ games respectively with each axis representing a pile of stones. The P-Positions are denoted in dark blue, while the N-positions are denoted in light blue and yellow. The yellow points show states of the game where there are multiple ways to reach a P-Position. Notice that there are two distinct lines of P-Positions in each game. In \S \ref{JARED} we conjecture the slope of these lines are and $1/\alpha$ wher $\alpha$ is the real root of the quadratic polynomial $\alpha x^2-x-\alpha$. This explains why the slope approaches the line $y=x,$ which is regular Nim (see \S \ref{sancho}).}
\end{figure}

\subsection{Motivating Example}
The $(2,2)$ and $(4,4)$ games provide strong motivation for exploring the $(2^m, 2^m)$ game, which we give a complete characterization for in §\ref{2a}. A table of the $(2,2)$ game P-Positions, as well as the differences between their $x$ and $y$ coordinates, is given below:
\begin{center}
\begin{tabular}{ |c|c|c|c|c|c| } 
\hline
$x_i$ & $y_i$ & $d_i = y_i - x _i$ & $x_i$ & $y_i$ & $d_i = y_i - x _i$\\
\hline
4 & 6 & 2 & 20 & 26 & 6\\ 
5 & 7 & 2 & 21 & 27 & 6\\ 
8 & 11 & 3 & 22 & 29 & 7\\
9 & 10 & 1 & 23 & 28 & 5\\
12 & 16 & 4 & 24 & 32 & 8\\
13 & 17 & 4 & 25 & 33 & 8\\
14 & 19 & 5 & 30 & 39 & 9\\
15 & 18 & 3 & 31 & 38 & 7\\
\hline
\end{tabular}
\end{center}
Notice that for  $(2,2)$ game, $d_i + 2 = d_{i+4}$.  We observe similar behavior for the $(4,4)$ game below, where $d_i + 4 = d_{i+16}$. We will show later that this pattern continues for the general $(2^m,2^m)$ game where $m \in \mathbb{N}$.

\begin{center}
\begin{tabular}{ |c|c|c|c| }
\hline
$x_i$ & $y_i$ & $d_i = y_i - x_i$  \\
\hline
16 & 20 & 4 \\ 
17 & 21 & 4 \\ 
18 & 22 & 4 \\
19 & 23 & 4 \\
24 & 29 & 5 \\
25 & 28 & 3 \\
26 & 31 & 5 \\
27 & 30 & 3 \\
32 & 38 & 6 \\
33 & 39 & 6 \\
34 & 36 & 2 \\
35 & 37 & 2 \\
40 & 47 & 7 \\
41 & 46 & 5 \\
42 & 45 & 3 \\
43 & 44 & 1 \\
\hline
\end{tabular}
\begin{tabular}{ |c|c|c|c| } 
\hline
$x_i$ & $y_i$ & $d_i = y_i - x_i$  \\
\hline
48 & 56 & 8 \\ 
49 & 57 & 8 \\ 
50 & 58 & 8 \\
51 & 59 & 8 \\
52 & 61 & 9 \\
53 & 60 & 7 \\
54 & 63 & 9 \\
55 & 62 & 7 \\
64 & 74 & 10 \\
65 & 75 & 10 \\
66 & 72 & 6 \\
67 & 73 & 6 \\
68 & 79 & 11 \\
69 & 78 & 9 \\
70 & 77 & 7 \\
71 & 76 & 5 \\
\hline
\end{tabular} 
\end{center}

And thus we define a \textit{Cyclic Game} below. We believe this property is unique to the $(a,a)$ game where $a$ is a power of $2$ (see \S\ref{sancho}).  \newline

\begin{definition}{(Cyclic Game)} \label{Cyclic}
    We call the $(a,a)$ game a \textit{Cyclic Game} if $i\equiv j$ $ ($mod $a^2)$ implies $d_{i}\equiv d_{j}  ($mod $a^2)$ where $i,j\in \mathbb Z_{\ge 0}.$  
\end{definition}

\subsection{$(2^m,2^m)$ Game} \label{2a}
We now prove an algorithm that generates the solution set for the Wythoff's variation formed by the basis vectors $\left \{(1,0),(0,1),(2^m,2^m) \right\}$. Duchene and Gravier provided an algorithm for the $m=1$ case, the $(2,2)$ game, by exploiting its cyclic behavior. We provide a similar algorithm for any $(2^m,2^m)$ game. Let $(a_n, b_n)$ be $P$-Positions for the $(2^m,2^m)$ game. For the rest of this section we let $a=2^m$ for some $m\in \mathbb{N}_{\geq 0}$. We give the following definitions.

\begin{definition}{($\left\{d_n\right\}$)}\label{hi}: The sequence of differences such that $d_i = y_i - x_i$, $1 \leq i \leq a^2$ where $(x_i,y_i)$ is a P-position of the $(a,a)$ game. Note that $\left\{d_n\right\}$ has exactly $a^2$ elements. 
  \end{definition}
\begin{definition}{($\left\{d'_n\right\}$)}: The sequence of differences such that $d_i = y_i - x_i$, $1 \leq i \leq 4a^2$ where $(x_i,y_i)$ is a P-position in the $(2a,2a)$ game. Notice that $\left\{d_n\right\}$ has exactly $4a^2$ elements. Moreover, $\left\{d_n'\right\} = \left\{d_n\right\}$ for the $(2a, 2a)$ game.

\end{definition}

  \begin{definition}{($A_i$)}
    $A_i$ denotes the contiguous subsequence of $\left\{d_n\right\}$. That is, $A_i = \left\{d_{ia}, \ldots, d_{ia+a-1} \right\}$. Note that the ordered union of $A_i$ from $i = 0$ to $n$ gives $\left\{d_n\right\}.$ 
  \end{definition}
   
  \begin{definition}{($||$)}
Define $(||)$ to be the concatenation operator. Thus, if we have two sequences, $A=a_1,a_2,\dots,a_n$ and $B=b_1,b_2,\dots,b_n,$ we have $A||B=a_1,a_2,\dots,a_n,b_1,b_2,\dots,b_n.$  
  \end{definition}

   \begin{definition}{($*$)}
Define $(*)$ to be an operator that adds a given integer to each element of a set. Thus, if we the set $A=\left\{a_1,a_2,\dots,a_n\right\},$ we have $A*1=\left\{a_1+1,a_2+1,\dots,a_n+1\right\}.$  
  \end{definition}

  \begin{lemma}\label{Difference Lemma}
  
  Define $B_i$ as the sequence for $\left\{d'_n\right\}$.Then, 
  \begin{align*}
  B_0 = \left\{A_0 || A_0 \right\}, \ldots, B_{a-1} = \left\{A_{a-1}|| A_{a-1}\right\},\ldots, 
  B_{a} = \left\{A_0 * 1 || A_0 *-1 \right\}, \ldots,
  B_{2a-1} = \left\{A_{a-1} * a || A_{a-1} *- a \right\}
  \end{align*}

\begin{proof}
    To prove, we must show that $(i, i+d'_i)$ is a minimal P-position for all $i$. We proceed by induction. It is easy to check that by the difference table for the $(2,2)$ and $(4,4)$ game, that the lemma holds as a base case.

Assume the lemma holds for all $i< m$, where $a=2^m$. Assume that the sequence of differences for the $(a,a)$ game has the following properties (for proofs that these properties then hold for the set of differences of the $(2a,2a)$ game, see \ref{Property 1}, \ref{Power Set}, \ref{p3}, \ref{Property 4}, \ref{Property 5}):

\begin{enumerate}
    \item $k \in \left\{d_n\right\}$ iff $-a<k<a$ \label{1}
    \item For any $i,j$ such that $i \equiv j \mod a$, $d_i \not\equiv d_j \mod a$ \label{2}
    \item For all $0 \leq i < a$, $d_i + d_{a-1} = 0$ \label{3}
    \item For all $i<2a^2,$ $d_i + d_{i+d_i}<a$  \label{4}
    \item $d_i + d_{i-a+d_i} < 2a$ for all $i<2a^2$ \label{5}
\end{enumerate}
We proceed by cases to show that  $\left\{d'_n\right\} = {||}_{j=0}^{2a-1} B_{j}.$
\begin{description}

\item[\textbf{Case 1: $i<2a^2$}]

We proceed via induction to show that that $k=0$ for all $d'_i$. The base case $d'_0=b_0-a_0=0=d_0,$ as $(a_0,b_0)=(0,0)$ holds by definition. Let $d'_g \in \left\{d_n\right\}$, $d'_g = d_x + ak$, and assume $k = 0$ for all $d'_{g}$, $g < i$ as an induction hypothesis. We know that $-a^2< x < a^2$ by property 1. For $k<0$, because $i + d'_i < i$, there exists a P-position $(j, j + d'_j)$ such that $j = i + d'_i < i$. Thus, $(i,i+d'_i)$ and $(j,j+d'_j)$ are the same point. This implies that \\
   \begin{align*}
    i&=j+d'_j\\
    j&=i+d'_i
   \end{align*}
    as $j=i+d'_i<i,$ $d'_j=d_y$ for some $y \in \mathbb{N}$. Simplifying yields 
    \begin{center}
    $d_x+d_y=-ak.$
    \end{center}
    As $d_x,d_y \in  \left\{d_n\right\}$, we have \[-a+1\leq (d_x,d_y)\leq a+1.\] This implies that $k\geq-1$.\\

    If $k=-1$, then we have 
    \begin{center}
    $d_x+d_y=a$
    \end{center}
    and $j=i+d'_i, $ $d'_i=d_x-a$, $d'_j=d_y.$ The definition of  $\left\{d'_n\right\}$ implies for $i<2a^2,$
    \begin{center}
    $y=x-a+d_x$ or $y=x+d_x.$
    \end{center}
    
    Then 
    \begin{center}
    $a=d_x+d_{x+a-d_x}$ or $a=d_x+d_{x+d_x},$
    \end{center}
    which both contradict property \ref{4} and \ref{5} respectively, which we assumed by hypothesis. \\
    
    Thus, $k\geq0.$ To obtain a minimal set of P-Positions, we want to minimize $d'_i$ for all $i.$ Hence, we conclude $k$ must be $0$ to obtain the minimum possible value of $d'_i.$ 
    
\item[\textbf{Case 2:} $ 2a^2\leq i<4a^2$] 

    By the same argument used in Case $1$, $k\geq-1.$ Now we show that $k\neq0.$ \\
    
    Assume for sake of contradiction that $k=0$. Then, for P-Position $(i,i+d'_i),$ $d'_j=d_x$ for some $x$. As $i\geq 2a^2,$ we have 
    \begin{equation} \label{eq4}
        i=2a^2+j, 0\leq j<2a^2.
    \end{equation}
    
    Notice that $d'_j=d_x$ for some $x$ from Case $1$. \\
    
    From (\ref{eq4}) and the definition of $\left\{d'_n\right\}$, $d'_i=d_x+ak$ because $i\equiv j \mod 2a^2$ \\
    
    Thus, the P-Position $(i,i+d'_i)$ can be written as
    \begin{equation} \label{point}
    (j+2^{2n+1},j+2^{2n+1}+d_x). 
    \end{equation}
    
    Because $2a \mid 2a^2,$ we can get to $(j,j+d_x)$ from position (\ref{point}). But $(j,j+d_x)$ is a P-Position, so we get a contradiction. Hence, $k\neq 0.$ \\
    
    By the minimality argument, we choose the least possible value of $k$, namely, $k=-1.$ Consider points $(j,j+d'_j)$ for \[j=2a^2,2a^2+1,\dots,2a^2+a-1.\]
    
    Note $d'_j=d_x-a$ for some $x,$ because $k=-1.$ Moreover, because $d_x \in \left\{d_n\right\},$ $d_x<a,$ we have $d_x-a<0.$ This yields \[j+d'_j=j+d_x-a<j.\]  
    
    Thus, there exists a P-Position, \[(j+d_x-a,j+d_x-a+d'_{j+d_x-a}),\] 
    which is the same as $(j,j+d'_j).$ This implies 
    \begin{center}
    \[j=j+d_x-a+d'_{j+d_x-a}\]
    \end{center}
    \[d_x+d'_{j+d_x-a}=a\]
    \begin{center}
    $d_x+d_y=a$ or $d_x+d_y=2a$
    \end{center}
    because $k=0$ or $k=-1$ for all $d'_k$ with $k<j.$ \\
    
    Both of these equations contradict property \ref{5}. So $k=1$ for \[j=2a^2,2a^2+1,\dots,2a^2+a-1.\]
    
    For \[j=2a^2+a,2a^2+a+1,\dots,2a^2+a+a-1,\]
    as $k=1$ for the previous $a$ values, we can choose $k$ to be minimal i.e. $k=-1.$ \\
    
    Repeating this argument $a$ times yields that the $k$ value alternates between $1$ and $-1$ for blocks $B_{a+i}$ and $B_{a+i+1}$ for all $i, 0\leq i<a-2$. This is exactly what we set out to prove.  
\end{description}
  \end{proof}
   \end{lemma}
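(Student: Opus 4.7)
My plan is to prove the block decomposition of $\{d'_n\}$ by strong induction on the position index $i$, using the inductive framework that relates the $(a,a)$ game to the $(2a,2a)$ game. I would assume the five properties of $\{d_n\}$ stated in the lemma hold for the $(a,a)$ game (they are verified at $m=0,1$ by the tables in the motivating example) and show that the P-positions of the $(2a, 2a)$ game must take the prescribed form. The argument naturally splits at $i = 2a^2$: the first half corresponds to the ``doubled'' blocks $B_0, \ldots, B_{a-1}$, and the second half to the ``shifted'' blocks $B_a, \ldots, B_{2a-1}$.

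For Case 1 ($i < 2a^2$), I would write $d'_i = d_x + ak$ where $d_x \in \{d_n\}$ and argue $k = 0$. Suppose $k \leq -1$; then $d'_i < 0$ sufficiently that $j := i + d'_i < i$, so the P-position $(i, i + d'_i)$ coincides with the earlier P-position $(j, j + d'_j)$. By induction, $d'_j = d_y$ for some $y$, and combining gives $d_x + d_y = -ak$. Property (1) bounds $|d_x|, |d_y| < a$, forcing $k \geq -1$; the remaining case $k = -1$ would yield $d_x + d_y = a$ with $y = x + d_x$ or $y = x - a + d_x$, each contradicting property (4) or (5) respectively. Since we always take the minimal available difference at each index, $k = 0$ follows, giving $B_j = A_j || A_j$ for $0 \leq j < a$.

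For Case 2 ($2a^2 \leq i < 4a^2$), the same backward-reference argument gives $k \geq -1$, but now $k = 0$ must also be ruled out. Writing $i = 2a^2 + j$ with $0 \leq j < 2a^2$, if $d'_i = d_x$, then the position $(j + 2a^2, j + 2a^2 + d_x)$ could be reduced to the known P-position $(j, j + d_x)$ via the move $2a \cdot (a, a)$, which is legal because $2a \mid 2a^2$, contradicting coldness. Thus $k \in \{-1, +1\}$. For the first sub-block $j = 2a^2, \ldots, 2a^2 + a - 1$, choosing $k = -1$ would produce a back-reference whose difference equation sums to $a$ or $2a$, again contradicting property (5), so $k = +1$; the next sub-block is then free to take $k = -1$ by minimality. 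Iterating this alternation across the $2a$ sub-blocks of size $a$ yields exactly $B_{a+j} = A_j * (j+1) || A_j * -(j+1)$ as claimed.

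The main obstacle is the careful bookkeeping for the alternation in Case 2: each sub-block's sign choice determines whether the next sub-block can achieve minimality, so the back-reference equations must be tracked through all $2a$ sub-blocks while invoking property (5) correctly at each step. A secondary concern, which I would defer to auxiliary claims, is verifying that the resulting $\{d'_n\}$ itself satisfies properties (1)--(5) in the $(2a, 2a)$ game, so that the outer induction on $m$ closes and the lemma propagates to all powers of $2$.
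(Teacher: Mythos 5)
Your proposal follows essentially the same route as the paper's own proof: the same decomposition $d'_i = d_x + ak$, the same split at $i = 2a^2$, the same back-reference argument bounding $k \geq -1$ via property (1), the same use of properties (4) and (5) to exclude $k = -1$ in Case 1 and the divisibility of $2a^2$ by $2a$ to exclude $k = 0$ in Case 2, and the same minimality-plus-alternation bookkeeping across the sub-blocks. The deferral of re-establishing properties (1)--(5) for $\{d'_n\}$ to auxiliary claims is exactly what the paper does in Lemmas \ref{Property 1}--\ref{Property 5}.
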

   
 We now illustrate how Lemma \ref{Difference Lemma} works for the $(2,2)$ and $(4,4)$ games by listing $\left\{d_n\right\}$ and $\left\{d'_n\right\}$ (which we call the difference tables) of the $(2,2)$ and $(4,4)$ games respectively.  

\begin{center}
\label{tab:title}
\hspace{2cm}
\begin{tabular}{ |c|c|c| } 
\hline
$d_i$ (2,2) & $d'_i$\\
\hline
$d_0$ & 0\\ 
$d_1$ & 0\\ 
$d_2$ & 1\\
$d_3$ & -1\\
\hline
\end{tabular}
\hspace{1.5cm}
\begin{tabular}{|c|c|c|c|c|c|}
\hline
    $d'_j$ (4,4) & $d_i$ (2,2) $+ k$ & $d'_i$ & $d'_j$ (4,4) & $d_i$ (2,2) $+ k$ & $d'_i$ \\
     \hline
     $d'_0$ & $d_0 + 0$ & 0 & $d'_8$ & $d_0 + 2$ & 2\\
     $d'_1$ & $d_1 + 0$ & 0 & $d'_9$ & $d_1 + 2$ & 2\\
     $d'_2$ & $d_0 + 0$ & 0 & $d'_{10}$ & $d_0 - 2$ & -2\\
     $d'_3$ & $d_1 + 0$ & 0 & $d'_{11}$ & $d_1 - 2$ & -2\\
     $d'_4$ & $d_2 + 0$ & 1 & $d'_{12}$ & $d_2 + 2$ & 3\\
     $d'_5$ & $d_3 + 0$ & -1 & $d'_{13}$ & $d_3 + 2$ & 1\\
     $d'_6$ & $d_2 + 0$ & 1 & $d'_{14}$ & $d_2 - 2$ & -1\\
     $d'_7$ & $d_3 + 0$ & -1 & $d'_{15}$ & $d_3 - 2$ & -3\\
     \hline
\end{tabular} \par
\bigskip
Table 1: (2,2) Differences.
\hspace{3cm}
Table 2: (4,4) Differences
\hspace{3cm}
\end{center}

\begin{comment}
\begin{center}
\label{tab:title} 
\begin{tabular}{ |c|c|c|c|c| } 
\hline
$d_i$ (2,2) & $d'_i$\\
\hline
$d_0$ & 0\\ 
$d_1$ & 0\\ 
$d_2$ & 1\\
$d_3$ & -1\\
\hline
\end{tabular} \par
\bigskip
Table 1: (2,2) Differences
\end{center}
\begin{center}
\begin{tabular}{|c|c|c|c|c|c|}
\hline
    $d'_j$ (4,4) & $d_i$ (2,2) $+ k$ & $d'_i$ & $d'_j$ (4,4) & $d_i$ (2,2) $+ k$ & $d'_i$ \\
     \hline
     $d'_0$ & $d_0 + 0$ & 0 & $d'_8$ & $d_0 + 2$ & 2\\
     $d'_1$ & $d_1 + 0$ & 0 & $d'_9$ & $d_1 + 2$ & 2\\
     $d'_2$ & $d_0 + 0$ & 0 & $d'_{10}$ & $d_0 - 2$ & -2\\
     $d'_3$ & $d_1 + 0$ & 0 & $d'_{11}$ & $d_1 - 2$ & -2\\
     $d'_4$ & $d_2 + 0$ & 1 & $d'_{12}$ & $d_2 + 2$ & 3\\
     $d'_5$ & $d_3 + 0$ & -1 & $d'_{13}$ & $d_3 + 2$ & 1\\
     $d'_6$ & $d_2 + 0$ & 1 & $d'_{14}$ & $d_2 - 2$ & -1\\
     $d'_7$ & $d_3 + 0$ & -1 & $d'_{15}$ & $d_3 - 2$ & -3\\
     \hline
\end{tabular} \par
\bigskip
Table 2: (4,4) Differences
\end{center}
\end{comment}

We now prove properties \ref{1}, \ref{2}, \ref{3}, \ref{4}, \ref{5}. 
 
  \begin{lemma} \label{Property 1}
  Property \ref{1}: $k \in \left\{d'_n\right\}$ iff $-2a < k < 2a$.

  \begin{proof}
   
  For the forward direction, by Lemma \ref{Difference Lemma}, $d'_i = d_x + j$, $j = 0$.  By induction hypothesis, $-a < d_x < a$.  It follows that $-2a < d'_i < 2a$.
  And for the reverse direction, notice that
  \begin{align*}
      d_i + 0 & \in \left\{d'_n\right\} \\
      d_i +a & \in \left\{d'_n\right\} \\
      d_i -a & \in \left\{d'_n\right\} \\
  \end{align*}
  for all $i$, $0 \leq i < a$.  Also, for all $k$, $-a < k < a$, $k \in \left\{d_n\right\}$, it follows from the above that for all $k$, $-2a< k < 2a$, $k \in \left\{d'_n\right\}$.
  
  \end{proof}
    \end{lemma}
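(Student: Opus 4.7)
The plan is to derive Property \ref{1} as an essentially immediate corollary of Lemma \ref{Difference Lemma} together with the inductive hypothesis, which asserts that Property \ref{1} already holds in the $(a,a)$ game, so that $\{d_n\}$, viewed as a set, is exactly $\{k \in \mathbb{Z} : -a < k < a\}$.

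For the forward direction, I would take an arbitrary $d'_i \in \{d'_n\}$ and invoke Lemma \ref{Difference Lemma} to write $d'_i = d_x + c$, where $d_x$ is an entry of the original sequence $\{d_n\}$ and $c$ is a block offset satisfying $|c| \leq a$ (the shifts appearing in the blocks $B_0, \ldots, B_{2a-1}$ range from $-a$ through $a$). Applying the inductive hypothesis gives $|d_x| < a$, so by the triangle inequality $|d'_i| < 2a$, which yields $-2a < d'_i < 2a$.

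For the reverse direction, the goal is to exhibit each integer $k$ with $-2a < k < 2a$ as some entry of $\{d'_n\}$. The block decomposition furnished by Lemma \ref{Difference Lemma} expresses $\{d'_n\} = B_0 || B_1 || \cdots || B_{2a-1}$. The first $a$ blocks reassemble $\{d_n\}$ (each $A_i$ appears duplicated inside $B_i$), so by the inductive hypothesis the slice $\{k : -a < k < a\}$ is already covered. The later blocks contain the shifted pieces $A_i * (\pm a)$; aggregating over all $i$ one recovers, setwise, $\{d_n\} + a = \{1,2,\ldots,2a-1\}$ and $\{d_n\} - a = \{-(2a-1),\ldots,-1\}$. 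The union of these three pieces exhausts $\{-(2a-1), \ldots, 2a-1\}$.

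The step I expect to require the most care is the reverse direction's combinatorial bookkeeping: one has to trace through Lemma \ref{Difference Lemma} to confirm that as $i$ ranges over $0, \ldots, a-1$ the shifted sub-blocks $A_i * a$ (respectively $A_i * -a$) together reconstitute the full shifted set $\{d_n\} + a$ (respectively $\{d_n\} - a$), rather than only a proper subset. Once that correspondence is verified, Property \ref{1} for the $(2a,2a)$ game follows directly from Property \ref{1} for the $(a,a)$ game, completing the inductive step.
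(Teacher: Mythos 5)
Your proposal is correct and follows essentially the same route as the paper: the forward direction bounds $d'_i = d_x + c$ using $|d_x|<a$ and the block shifts from Lemma \ref{Difference Lemma}, and the reverse direction observes that $\left\{d_n\right\}$, $\left\{d_n\right\}*a$, and $\left\{d_n\right\}*(-a)$ all appear among the $B_j$ and jointly cover $(-2a,2a)$. If anything you are more careful than the paper, whose forward direction only records the shift $j=0$ rather than the full range $\{0,\pm a\}$.
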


 \begin{lemma}\label{Power Set}
 Property \ref{2}: If $i \equiv j \pmod{2a}$, then $d'_i\not\equiv d'_j \pmod{2a}$
 \end{lemma}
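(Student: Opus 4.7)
The plan is to combine the explicit block structure of $\{d'_n\}$ supplied by Lemma \ref{Difference Lemma} with the induction hypothesis (Property \ref{2} for the $(a,a)$ game). Since the statement concerns indices $0 \le i,j < 4a^2$, I would partition these into the $2a$ residue classes modulo $2a$ (each of size $2a$) and show that on each class the values of $d'$ realize all $2a$ residues modulo $2a$, which is equivalent to the desired claim.

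Fix a residue $s \in \{0,1,\ldots,2a-1\}$ and set $s_0 = s \bmod a$. Lemma \ref{Difference Lemma} tells us that for $0 \le k < a$, $B_k = A_k || A_k$ while $B_{a+k} = (A_k * a) || (A_k * (-a))$. Reading off the $s$-th entry of each of the $2a$ blocks $B_0, \ldots, B_{2a-1}$ (these are precisely the indices $\equiv s \pmod{2a}$), I obtain that the $2a$ values of $d'$ in the residue class $s \pmod{2a}$ form the multiset
\[
\{d_{s_0 + ka} : 0 \le k < a\}\ \cup\ \{d_{s_0 + ka} + \epsilon a : 0 \le k < a\},
\]
where $\epsilon = +1$ when $s < a$ and $\epsilon = -1$ when $s \ge a$. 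In other words, the entries in this class come in $a$ matched pairs $(v_k, v_k + \epsilon a)$ with $v_k := d_{s_0 + ka}$.

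Next I would invoke the inductive Property \ref{2}: the indices $s_0, s_0+a, \ldots, s_0+(a-1)a$ are pairwise congruent mod $a$, so the values $v_0, \ldots, v_{a-1}$ hit every residue modulo $a$ exactly once. For each $k$, the two numbers $v_k$ and $v_k + \epsilon a$ are congruent modulo $a$ but differ by exactly $a$, so they represent the two distinct lifts of $v_k \bmod a$ inside $\mathbb{Z}/2a\mathbb{Z}$. Letting $k$ range over $\{0,\ldots,a-1\}$, the $a$ matched pairs collectively cover every residue modulo $2a$ exactly once, which is precisely Property \ref{2} for the $(2a,2a)$ game.

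The main subtlety is verifying that $v_k$ and $v_k + \epsilon a$ genuinely lie in different residue classes modulo $2a$; this is where the inductive Property \ref{1} (the bound $-a < v_k < a$) is essential, since it prevents $v_k$ and $v_k + \epsilon a$ from aliasing onto the same residue modulo $2a$. Beyond that point the argument is combinatorial bookkeeping about how the concatenations in Lemma \ref{Difference Lemma} align with the residues modulo $2a$.
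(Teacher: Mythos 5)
Your proposal is correct and follows the same underlying idea as the paper's proof --- identify the indices in a fixed residue class modulo $2a$ with the ``corresponding terms'' of $\{d_n\}$ in a fixed residue class modulo $a$, then use the inductive Property \ref{2} together with the $0,\,+a,\,-a$ shifts from Lemma \ref{Difference Lemma}. However, your version is substantially more careful than the paper's two-sentence argument, and it actually repairs a defect in it: the paper asserts that $i\equiv j\pmod{2a}$ forces $d'_i\not\equiv d'_j\pmod{a}$, which is false for your ``matched pairs'' $v_k$ and $v_k+\epsilon a$ (these are congruent modulo $a$ and differ by exactly $a$); your lift-counting argument handles precisely these pairs and is the right way to close the gap. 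Two small remarks. First, the displayed form of Lemma \ref{Difference Lemma} writes $B_a=\{A_0*1\,\|\,A_0*(-1)\}$, but the $(4,4)$ difference table and the proof of Lemma \ref{Property 1} make clear the intended shift is uniformly $\pm a$, which is the reading you adopted --- correctly. Second, your claim that Property \ref{1} is \emph{essential} to keep $v_k$ and $v_k+\epsilon a$ in distinct classes modulo $2a$ is an over-caution: any two integers differing by exactly $a$ are automatically incongruent modulo $2a$ (since $2a\nmid a$), so no bound on $v_k$ is needed for that step; the bound matters only for knowing that the pair gives the two lifts of a residue class modulo $a$, which your argument already obtains from the difference being exactly $a$.
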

\begin{proof}
For $i \equiv j \pmod{2a}$, $d'_i \not\equiv d'_j \pmod{a}$, so $d'_i \equiv d'_j \pmod{2a}$ as $i \equiv j \pmod{2a}$ is the same as choosing corresponding terms mod $a$ in $\left\{d_n\right\}$.  Thus, the lemma holds for $\left\{d_n\right\}$. 
\end{proof} 

\begin{lemma}\label{p3}
Property \ref{3}: For all $0 \leq i < a$, $d_i + d_{a-1} = 0$.
\begin{proof}
We show this by induction. Assume true for all values less than or equal to $k$ with set of differences $\left\{d_n\right\}$, then for $2k$, we get $\left\{d'_n\right\}$.  We showed $k = 0, -1, \text{ or } 1$, so if $k=0$, $d'_i + d'_{a-i-1}$ simply follows by assumption.

If $k= 1 \text{ or } -1$, then $d'_i + a + d'_{a-i-1} - a= d'_i + d'_{a-i-1} = 0$
\end{proof}
\end{lemma}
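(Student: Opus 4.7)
The plan is induction on $m$ with $a = 2^m$, carrying the strengthened block-wise antisymmetry $d_{aj+i} + d_{aj+a-i-1} = 0$ for every $0 \le j < a$ and $0 \le i < a$. The stated property is the $j = 0$ instance, but carrying the stronger form is essential because Lemma \ref{Difference Lemma} mixes \emph{all} blocks $A_{j'}$ into the construction of $\{d'_n\}$ (not just $A_0$), so an induction with only the weak form will not close.

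The base case is $m = 1$ (so $a = 2$): direct inspection of Table 1 gives $A_0 = (0,0)$ and $A_1 = (1,-1)$, each of which is antisymmetric, so the strengthened hypothesis holds.

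For the inductive step, I fix a block $B_j$ of $\{d'_n\}$ (of length $2a$) together with a mirror pair of indices $(i,\, 2a-i-1)$ inside $B_j$. By Lemma \ref{Difference Lemma}, either $B_j = A_{j} \,||\, A_{j}$ (when $0 \le j < a$, which is the $k=0$ case in the proof of Lemma \ref{Difference Lemma}) or $B_j = A_{j-a} * a \,||\, A_{j-a} * (-a)$ (when $a \le j < 2a$, the $k=\pm 1$ case). Taking WLOG $0 \le i < a$ so that $a \le 2a-i-1 < 2a$, the index $i$ lies in the left half of $B_j$ and $2a-i-1$ in the right half, so the two entries come from positions $i$ and $a-i-1$ of $A_{j'}$ with $j' = j$ or $j-a$. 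In the $k=0$ case the pair sums to $d_{aj'+i} + d_{aj'+a-i-1}$, which is zero by the inductive hypothesis applied to block $A_{j'}$. In the $k=\pm 1$ case the pair sums to $(d_{aj'+i} + a) + (d_{aj'+a-i-1} - a)$; the $+a$ and $-a$ cancel because the left copy of $A_{j'}$ in $B_j$ carries a $+a$ shift while the right copy carries $-a$, and the remaining sum is again zero by the inductive hypothesis.

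The main obstacle is strictly bookkeeping: verifying that the mirror translation $i \mapsto 2a-i-1$ always sends the two entries of the pair into opposite halves of $B_j$, so that the $\pm a$ shifts of Lemma \ref{Difference Lemma} always appear with opposite signs and cancel cleanly. Once this index translation is set up correctly, both cases reduce mechanically to the block-wise antisymmetry of $A_{j'}$, which is exactly the inductive hypothesis.
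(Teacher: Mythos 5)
Your proposal is correct and follows essentially the same route as the paper's own (much terser) argument: both read off from Lemma \ref{Difference Lemma} that each block of $\left\{d'_n\right\}$ is either an unshifted double copy of a block of $\left\{d_n\right\}$ (the $k=0$ case, where antisymmetry is inherited directly) or a $+a$-shifted copy concatenated with a $-a$-shifted copy (the $k=\pm1$ case, where the shifts cancel across the mirror pair). Your explicit strengthening to block-wise antisymmetry $d_{aj+i}+d_{aj+a-i-1}=0$ for all $j$ is exactly what the paper's proof implicitly assumes when it handles the $k=\pm1$ case, so you have just made the paper's induction hypothesis precise rather than changed the argument.
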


\begin{lemma}\label{Property 4}
Property \ref{4}: For all $j<2a^2$, $d_j + d_{j+d_j}<2a$
\begin{proof}
Because $j<2a^2,$ there exists $d_x,d_y$ for some $x,y$, such that $d'_j=d_x$ and $d_{j-a+d_j}=d_y$\\
By Lemma \ref{Property 1} applied to the $(a,a)$ case, we know that $d_x<a$ and $d_y<a$.Therefore $d_x+d_y=d'_j+d_{j+d_j}<2a$

\end{proof}
\end{lemma}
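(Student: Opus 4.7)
The plan is to reduce the claim to the inductive hypothesis by invoking Lemma \ref{Difference Lemma}. Because $j < 2a^2$, Case 1 of that lemma applies and the block-shift coefficient $k$ vanishes; consequently $d'_j$ coincides with some $d_x$ in the sequence $\{d_n\}$ of the $(a,a)$ game, where $0 \leq x < a^2$. Applying the same identification to the index $j + d'_j$ yields $d'_{j+d'_j} = d_y$ for some $y$. Once both terms are recognized as elements of $\{d_n\}$, the claim follows immediately from Property \ref{1} (the inductive hypothesis applied to the $(a,a)$ game): each of $d_x$ and $d_y$ lies in $(-a, a)$, and adding these two bounds gives $d'_j + d'_{j+d'_j} = d_x + d_y < 2a$.

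The only step that is not purely formal is the range check needed to make the second reduction rigorous: one must verify that $j + d'_j$ remains in the Case 1 regime $[0, 2a^2)$, so that the reduction $d'_{j+d'_j} = d_y$ is legitimate. When $d'_j \leq 0$ this is automatic since $j + d'_j \leq j < 2a^2$; when $d'_j > 0$, the bound $|d'_j| < a$ obtained from the first application of Property \ref{1}, together with the explicit block structure of $\{d'_n\}$ described in Lemma \ref{Difference Lemma}, lets one handle the boundary case $j$ near $2a^2$ either by confirming the index stays in Case 1 or by re-doing the bound via Case 2 and checking that $d'_j + d'_{j+d'_j} < 2a$ still survives. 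This boundary bookkeeping is the main (and essentially only) obstacle; stripping it away, the proof is a one-line combination of Lemma \ref{Difference Lemma} and the inductive Property \ref{1}, in perfect analogy with how Lemma \ref{Property 1} lifted the bound from $a$ to $2a$.
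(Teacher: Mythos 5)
Your proof takes essentially the same route as the paper's: use Case~1 of Lemma \ref{Difference Lemma} to identify $d'_j$ and $d'_{j+d'_j}$ with elements $d_x,d_y$ of $\left\{d_n\right\}$, then add the two bounds from Property \ref{1} for the $(a,a)$ game to get the sum below $2a$. The boundary bookkeeping you flag (verifying that the index $j+d'_j$ stays in the Case~1 regime) is in fact skipped entirely by the paper, so your version is, if anything, the more careful one.
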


\begin{lemma}\label{Property 5}
Property \ref{5}:  $d'_j+d_{j-a+d_j}<2a$, for all $j<2a^2$ 
\begin{proof}
Because $j<2a^2,$ there exists $d_x,d_y$ for some $x,y$, such that $d'_j=d_x$ and $d_{j-a+d_j}=d_y.$ By Lemma \ref{Property 1} applied to the $(a,a)$ case, we know that $d_x<a$ and $d_y<a.$ Therefore $d_x+d_y=d'_j+d_{j-a+d_j}<2a$

\end{proof}
\end{lemma}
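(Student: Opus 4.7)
The plan is to reduce the bound for the $(2a,2a)$ difference sequence to the already-established bound for the $(a,a)$ game, exploiting the recursive description of $\{d'_n\}$ provided by Lemma \ref{Difference Lemma}.

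First, I would locate the index $j$ within the block structure of $\{d'_n\}$. Because $j<2a^2$, we lie within the first $a$ blocks $B_0,\ldots,B_{a-1}$, which by Case 1 of the proof of Lemma \ref{Difference Lemma} are populated by unshifted entries of $\{d_n\}$ (i.e.\ the multiplier $k$ is zero). Consequently there is an index $x$ with $0\le x<a^2$ such that $d'_j=d_x$. The same block analysis applies to the index $j-a+d_j$ (once one checks it is a valid index, see below), yielding $d_{j-a+d_j}=d_y$ for some $y<a^2$.

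Next, I would invoke the inductive hypothesis for the $(a,a)$ game, specifically Property \ref{1} as established for $\{d_n\}$: every element of $\{d_n\}$ satisfies $-a<d_i<a$. Applying this to both $d_x$ and $d_y$ gives $d_x<a$ and $d_y<a$, and summing yields
\[
d'_j+d_{j-a+d_j}=d_x+d_y<2a,
\]
which is the required conclusion. Note that this argument is essentially parallel to the one given for Lemma \ref{Property 4}, differing only in the shift by $-a$ inside the index of the second term.

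The main obstacle I anticipate is the bookkeeping required to ensure that the index $y=j-a+d_j$ genuinely lies in the range $[0,a^2)$, so that the appeal to Property \ref{1} applied to $d_y$ is legal. Since $d_j$ can be as small as $-(a-1)$ by Property \ref{1}, the quantity $j-a+d_j$ can become negative for small $j$, and it may also fail to land in the first-half regime for $j$ close to $2a^2$. Handling these edge cases cleanly is the delicate step; I would expect to rule them out by combining Property \ref{3} (the symmetry $d_i+d_{a-i-1}=0$) with Property \ref{2} (distinctness modulo $a$), which together constrain how $d_j$ can behave near the boundary blocks and prevent $y$ from escaping the admissible range.
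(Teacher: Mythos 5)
Your proposal takes essentially the same route as the paper: because $j<2a^2$ both $d'_j$ and $d_{j-a+d_j}$ are unshifted copies $d_x$, $d_y$ of entries from the $(a,a)$ difference sequence, and the inductive bound $d_x,d_y<a$ from Property \ref{1} sums to give the claim. The index-range concern you flag for $j-a+d_j$ is real but is also left unaddressed in the paper's own (very terse) proof, so your sketch matches the paper's argument in both substance and level of rigor.
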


We now provide definitions and lemmas to allow us to prove Theorem \ref{result}, which allows us to fully characterize the $(a,a)$ game.
\begin{definition}{($P_n$,$Q_n$)}\label{P_n}
  Let $P_n = mex\left\{P_i, Q_i\right\}$ from $i=1$ to  $n-1$ and $Q_n = P_n + a \lfloor \frac{n}{a^2} \rfloor$
\end{definition}

\begin{definition}{$(p_n, q_n)$}\label{p_n}
Let $p_n = P_n$ and $q_n = Q_{n+d_i}$ for $n \equiv j \pmod{a}$ for all $j$ such that $d_j = d_i$
\end{definition}

\begin{lemma} \label{jarediscool}
For all $k \in \mathbb{Z}_{\geq 0}$, there exist integers $k_0, k_1, \ldots, k_{a-1}$ such that 
\begin{align*}
(p_{a^2k + 0}, q_{a^2k + 0}) &= (ak_0 + 0, ak_0 + 0 + ak + d_0) \\
  & \vdots \\
  (p_{a^2k + a-1}, q_{a^2k + a-1}) &= (ak_0 + a-1, ak_0 + a-1 + ak + d_{a-1}) \\
  (p_{a^2k + a}, q_{a^2k + a}) &= (ak_1 + 0, ak_1 + 0 + ak + d_a) \\
  & \vdots \\
  (p_{a^2k + a^2 -1}, q_{a^2k + a^2 - 1}) &= (ak_{a-1} + a - 1, ak_{a-1} + a - 1 + ak + d_{a^2-1})
\end{align*}
\begin{proof}
By definition of $(p_n, q_n)$ and $(P_n, Q_n)$, each point $(p_{a^2k+i},q_{a^2k+i} )$ satisfies $q_{a^2k+i} - p_{a^2k+i} = d_{a^2k+i}$.  So it suffices to show that $p_{a^2k+i}$ is correct for all $i$. We proceed by induction. For a base case $k=0,$ we have $p_i = i$, hence the lemma follows. Assume the lemma is true for all values up to $k$ as an inductive hypothesis.\\
Then, as $p_n$ is an increasing sequence, $p_{a^2k + a} > ak_{a-1} + a-1$.  By definition of $p_n$ (\ref{p_n}) using the mex properties, $p_{a^2k+a}$ cannot equal $ak_{a-1} + a$ iff $ak_{a-1} + a$ has occurred in the $q_n$ sequence before.  By induction hypothesis,
\begin{center}
$ak_{a-1} + a = q_{a^2k +ai +i}$, some $i, 0 \leq i < a$
\end{center}
as $ak_{a-1} + a$ has occurred before. By the induction hypothesis, all terms of the form $ak_{a-1} + a+j$, $0 \leq j < a$ have occurred before in the $q_n$ sequence. Because only a finite number of terms have occurred before, we can choose $k'_0$ such that $p_{a^{2}(k+1)} = ak'_0 + l$. If $ l \neq 0$, then $ak'_0 - l \equiv 0 \pmod{a}$, which implies $ak_0 + l$ has occurred in the sequence before, a contradiction.

Hence, $p_{a^{2}(k+1)} = ak'_0$, which implies the lemma holds for $p_{a^{2}(k+1)}$ to $p_{a^{2}(k+1) + a-1}$.\\
Repeating the same argument for $p_{a^{2}(k+1) + a}$ to $p_{a^{2}(k+1) + 2a-1}$, \ldots $p_{a^{2}(k+1) + a(a-1)}$  to $p_{a^{2}(k+1) + a^2-1}$ shows the lemma is true for $k+1$.  This completes the proof by induction.

\end{proof}
\end{lemma}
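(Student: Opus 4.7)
The plan is to induct on $k$. The base case $k=0$ is immediate: for $n<a^2$ one has $\lfloor n/a^2\rfloor=0$, so $Q_n=P_n$, and the mex recurrence of Definition~\ref{P_n} forces $P_n=n$; the witnesses $k_j=j$ for $0\le j<a$ then match the claimed form, with $q_n=n+d_n$.

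For the inductive step, I would assume the lemma for all indices strictly below $a^2(k+1)$ and first show $p_{a^2(k+1)}=ak'_0$ for some integer $k'_0$, then inductively produce $k'_1,\ldots,k'_{a-1}$ on the $a-1$ subsequent sub-blocks of length $a$. Since $p_n$ is strictly increasing (being the mex over all prior $p$ and $q$ values), it suffices to identify the smallest integer exceeding $ak_{a-1}+a-1$ that has not already been used, and to show it is a multiple of $a$.

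The heart of the argument is to show that every integer in $\{ak_{a-1}+a,\ldots,ak_{a-1}+2a-1\}$ already occurs as some $q$-value before index $a^2(k+1)$, so that mex skips this width-$a$ window and lands on $ak_{a-1}+2a$, a multiple of $a$. I would prove this by a residue argument. By the induction hypothesis, the $j$-th sub-block at level $k$ contributes $q$-values of the form $ak_j+i+ak+d_{aj+i}$ for $0\le i<a$. The structural properties of $\{d_n\}$ from Lemma~\ref{Difference Lemma}, in particular the magnitude bound $|d|<a$ (property~\ref{1}), force the residues $\{i+d_{aj+i}\bmod a : 0\le i<a\}$ to exhaust $\mathbb{Z}/a\mathbb{Z}$, so a single sub-block's $q$-values form a full residue system mod $a$ inside a window of width less than $2a$. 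Locating the prior sub-block whose $q$-values lie in $[ak_{a-1}+a,ak_{a-1}+2a-1]$ then yields the required coverage.

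Once $p_{a^2(k+1)}=ak'_0$ is secured, the freshly-computed $q$-values $ak'_0+i+a(k+1)+d_i$ for $0\le i<a$ again form a full residue system mod $a$ inside their window, and the same mex reasoning extracts $k'_1$ as the smallest untouched multiple of $a$, and so on through $k'_{a-1}$. The main obstacle I anticipate is the bookkeeping across \emph{all} prior levels, not just level $k$: one must rule out that a stray $q$-value from a much earlier block has landed inside the current window and forced mex onto a non-multiple of $a$. The bound $|d|<a$ together with the inductive block structure should control this, but a careful cross-level collision analysis will be needed.
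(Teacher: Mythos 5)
Your proposal follows essentially the same route as the paper's own proof: induction on $k$, using the strictly increasing mex property of $p_n$ and the claim that all of $ak_{a-1}+a,\ldots,ak_{a-1}+2a-1$ already appear among earlier $q$-values, so that the mex is forced onto a multiple of $a$, block by block. Your residue-system justification for that coverage (and your explicit flagging of the cross-level collision issue) is in fact more detailed than the paper, which simply asserts the coverage from the induction hypothesis.
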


\begin{theorem}\label{result}
The sequence $(p_n, q_n)$ describes all P-positions of the $(a, a)$ game.

\begin{proof}
We know 
\begin{align*}
  (p_{a^2k + 0}, q_{a^2k + 0}) &= (ak_0 + 0, ak_0 + 0 + ak + d_0) \\
  & \vdots \\
  (p_{a^2k + a-1}, q_{a^2k + a-1}) &= (ak_0 + a-1, ak_0 + a-1 + ak + d_{a-1}) \\
  (p_{a^2k + a}, q_{a^2k + a}) &= (ak_1 + 0, ak_1 + 0 + ak + d_a) \\
  & \vdots \\
  (p_{a^2k + a^2 -1}, q_{a^2k + a^2 - 1}) &= (ak_{a-1} + a - 1, ak_{a-1} + a - 1 + ak + d_{a^2-1})
\end{align*} 
by Lemma \ref{jarediscool}.
The base case $k = 0$ was proven in Lemma \ref{Difference Lemma}.  Assume the proposition holds true for $p_{a^2i-1}, q_{a^2i-1}$ for all $i<k$.We wish to show the lemma holds for $p_{a^2k}, p_{a^2k + 1}, \ldots, p_{a^2k+a^2-1}$. Suppose $p_{a^2k+e} \equiv p_{a^2l + f} \pmod{a}$ for $e,f \in \mathbb{Z}_{\geq0}$.  Then,
\begin{align*}
    p_{a^2k + e} - q_{a^2k + e} &= p_{a^2l + f} - q_{a^2l + f} \\
    ak + d_e &= al + d_f \\
    k-l &= \frac{d_f-d_e}{a}. \\
\end{align*}
And, since $\frac{d_f-d_e}{a} \leq \frac{a-1-(-(a-1))}{a}$, which is the maximal value $\frac{d_f-d_e}{a}$, it follows that $k-l = 1$ as $k-l\leq\frac{a-1-(-(a-1))}{a} = 2 - \frac{2}{a} < 2$ and $k-l \in \mathbb{N}$.
Then, 
\begin{align*}
    ak + d_e &= al + d_f \\
    a(l+1) + d_e &= al + d_f \\
    a + d_e &= d_f, \\
\end{align*}
which contradicts lemma \ref{Power Set}, completing our proof.

\end{proof}
\end{theorem}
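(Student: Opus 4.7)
The plan is to verify the two characterizing properties of a P-position set: closure (no legal move sends one element of $(p_n, q_n)$ to another) and completeness (every position outside the sequence admits a legal move to one in it). The available moves split into single-pile Nim moves $(k,0), (0,k)$ and diagonal moves $\gamma(a,a)$, and I would handle these separately.

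Single-pile moves are handled automatically by the mex construction. By Definition \ref{P_n}, each non-negative integer appears exactly once in $\{P_n\} \cup \{Q_n\}$; since $p_n = P_n$ and $q_n$ is a permutation of the $Q_n$'s (Definition \ref{p_n}), no two distinct pairs $(p_i, q_i)$ share a first or second coordinate, ruling out $(k,0)$ and $(0,k)$ moves between P-positions. Dually, any $(x,y)$ outside the sequence whose first (or second) coordinate already matches some $p_n$ or $q_n$ admits a single-pile move into the sequence.

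The crux is the $\gamma(a,a)$ case. Using Lemma \ref{jarediscool}, indexing $n = a^2 k + r$ gives $q_n - p_n = ak + d_r$, and within each sub-block of length $a$ one has $p_n \equiv r \pmod{a}$. Suppose two distinct P-positions indexed $i = a^2 k + e$ and $j = a^2 l + f$ were joined by $\gamma(a,a)$. Then both coordinates shift by $\gamma a$, so $q_i - p_i = q_j - p_j$ and $p_i \equiv p_j \pmod{a}$. This yields the two conditions $ak + d_e = al + d_f$ and $e \equiv f \pmod{a}$. Property \ref{1} bounds $|d_e|, |d_f| < a$, forcing $|k - l| < 2$. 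If $k = l$ then $d_e = d_f$, and combined with $e \equiv f \pmod{a}$, Property \ref{2} forces $e = f$, contradicting distinctness. If $|k - l| = 1$ then $d_f - d_e = \pm a$, so $d_e \equiv d_f \pmod{a}$ at congruent indices, again contradicting Property \ref{2}.

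The main obstacle is this $\gamma(a,a)$-closure step. It is the one place where the full strength of Properties \ref{1}--\ref{5} through Lemma \ref{jarediscool} must be exploited together: the tight range of the $d_r$'s and their mod-$a$ distinctness at congruent indices are exactly what prevents a diagonal move from slipping between P-positions. Completeness for $\gamma(a,a)$ moves then follows by a parallel analysis, using the same equations to locate the unique shift $\gamma$ landing on a P-position from any $(x,y)$ unreachable via single-pile moves; everything else reduces to bookkeeping around the mex definitions.
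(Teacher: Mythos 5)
Your proposal follows essentially the same route as the paper: closure under the diagonal move $\gamma(a,a)$ is reduced to the pair of conditions $ak + d_e = al + d_f$ and $e \equiv f \pmod{a}$, the bound $|d_e|,|d_f| < a$ from Property \ref{1} forces $|k-l| \leq 1$, and Property \ref{2} (Lemma \ref{Power Set}) kills both remaining cases. You are in fact slightly more careful than the paper's own argument, which silently discards the $k=l$ possibility and leaves the single-pile and completeness bookkeeping (which you at least sketch) entirely implicit in the mex construction.
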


\subsection{$(b,b)$ Game}
We now introduce a general algorithm for the $(b,b)$ game, where $b \in \mathbb{N}.$ However, this algorithm does not provide an \textit{explicit} representation of the set of P-Positions for the general $(b,b)$ game.
\begin{theorem}
   Define $(0, 0)$ to be a P-Position for all games. Let $(a_0, b_0) = (0, 0)$ and define $(a_n, b_n)$ to be the $nth$ P-Position with $a_n < b_n$.  Then, $a_n = mex \left\{a_i , b_i : 0 \leq i < n\right\},$ where
   $b_n$ is the smallest number not in $\left\{a_i , b_i : 0 \leq i < n\right\}$ such that for any $i < n$ the following cannot be simultaneously true:
   \begin{center}
   $b_n-a_n=b_i-a_i$\\
        $a_n \equiv a_i \pmod{a}$\\
   \end{center}

\begin{proof}
First, we show that if the algorithm generates $(a_n, b_n)$, then there are no valid moves such that $(a_n, b_n)$ goes to $(a_i, b_i)$ for $i < n$.  Let $(a_i, b_i)$, $(a_j, b_j)$ be two points generated by the algorithm, $i > j$.  Assume for the sake of contradiction that there exists a valid move such that $(a_i, b_i)$ is sent to $(a_j, b_j)$.  The mex property in the algorithm ensures that there is exactly one P-position in every row and column.  Thus, $a_i \neq a_j$ and $b_i \neq b_j$. Thus, their must be a valid move $k(a,a)$ with $k \geq 1$.  Then, we get 
\begin{align*}
a_j &= a_i - ka \\
b_j &= b_i - ka
\end{align*}
This implies $a_i \equiv a_j \pmod{a}$ and $a_j - b_j = a_i - b_i$.  Hence, $(a_i, b_i)$ couldn't have been chosen by the algorithm, a contradiction.\\
Now we show that any P-position is always chosen by the algorithm.  Once a P-position $(x,y)$, we mark all lattice points $(x,k)$ for any $k>y$, $(l,y)$ for any $l>x$, and $(x+ka, y+ka)$ for any $k \geq 1$ as a N-Position.  Assume for the sake of contradiction that there exists a lattice point not marked as an N-Position or P-position.  Choose some point $(x,y)$ such that $(x,y)$ is the closest to the origin and not marked.  Without loss of generality, let $x \leq y$.  Since $(x,y)$ is not marked, and every column has exactly $1$ P-position, we can find a P-position $(x,y')$, $y'>y$.  Thus, either $y'=y$ which is not true by assumption, or there exists $(x_0, y_0)$ such that $x-x_0 \equiv y-y_0 \pmod{a}$.   But then $(x,y)$ is marked as a P-position by this algorithm, a contradiction.
\end{proof}
\end{theorem}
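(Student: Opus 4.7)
The plan is to prove the theorem in two complementary directions: first, that every pair $(a_n, b_n)$ produced by the algorithm is genuinely a P-position, and second, that every P-position of the $(b,b)$ game is in fact produced by the algorithm.

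For the first direction, I would pick two algorithm outputs $(a_i, b_i)$ and $(a_j, b_j)$ with $j < i$ and suppose for contradiction that a single legal move sends $(a_i, b_i)$ to $(a_j, b_j)$. The allowed moves split into $k(1,0)$, $k(0,1)$, and $k(b,b)$ with $k \geq 1$. A direct consequence of the mex rule defining $a_n$ is that the algorithm places exactly one selected point in each row and in each column, which rules out the first two move types. So the move must be diagonal, giving $a_i - a_j = b_i - b_j = kb$; but this forces both $a_i \equiv a_j \pmod{b}$ and $b_i - a_i = b_j - a_j$ to hold, which are precisely the two conditions the algorithm refuses to allow simultaneously when it chose $b_i$, a contradiction.

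For the second direction I would mark, for each algorithm-produced $(x, y)$, every lattice point reachable from it by one legal move: the column $\{(x, k) : k > y\}$, the row $\{(l, y) : l > x\}$, and the diagonal $\{(x + kb, y + kb) : k \geq 1\}$. Suppose for contradiction that some lattice point is neither selected nor marked, and choose a minimal such point $(x, y)$ under a well-ordering on $\mathbb{N}^2$ (for instance, lexicographic on $(x+y, x)$); assume $x \leq y$ without loss of generality. Because the mex rule makes $\{a_n\} \cup \{b_n\}$ partition $\mathbb{N}$, some selected point occupies column $x$; by the minimality of $(x, y)$ this column's representative must lie strictly above $y$. Then either $(x, y)$ should itself have been chosen when the algorithm opened column $x$, contradicting our assumption, or some earlier $(x_0, y_0)$ satisfied $x - x_0 \equiv y - y_0 \pmod{b}$ together with $y - x = y_0 - x_0$, in which case $(x, y)$ lies on the marked diagonal from $(x_0, y_0)$, again a contradiction.

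The main obstacle is coordinating the row/column uniqueness supplied by the mex with the diagonal condition encoded by the mod-$b$ plus difference constraint in the second direction. The delicate point is verifying that whenever the algorithm rejects a candidate value for $b_n$, the rejection is accompanied by an actual diagonal move from some previously selected P-position to the rejected candidate, so that the candidate is genuinely marked as an N-position rather than leaving a hole that the minimal-counterexample argument could exploit. A careful bookkeeping argument, tracing precisely why each rejected candidate corresponds to a witnessing earlier P-position, is the crux of the proof.
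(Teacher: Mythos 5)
Your proposal follows essentially the same two-part strategy as the paper's own proof: first ruling out legal moves between algorithm-generated positions by combining the mex row/column uniqueness with the diagonal rejection condition, then a minimal-unmarked-point argument to show every lattice point is accounted for. The ``delicate point'' you flag --- verifying that each rejected candidate for $b_n$ is genuinely marked by a diagonal move from an earlier selected position --- is treated no more carefully in the paper's proof than in your outline, so your attempt matches the paper's route and level of detail.
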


\subsection{Asymptotic Behavior}\label{JARED}
As we increase $a$ in the Wythoff variation formed by the basis $\left\{(1,0),(0,1),(a,a)\right\}$, the lines that arise from a plot of the P-Positions on $\mathbb{N}^2$ seem to converge. We conjecture that as $a$ grows increasingly large, this game will approach the standard Nim game, with cold positions along the line $y=x$. 

This behavior intuitively can be explained by setting the third basis vector $(a,a)$ to a point defined at infinity, $(\infty,\infty)$. Thus, this third vector has no effect on the graph within the grid $(\infty,\infty)$, equivalent to not existing. Therefore, we have move vectors $\left\{(1,0),(0,1)\right\},$ which is just the standard Nim game. 

\subsection{N-Dimensional Games}

\begin{prop}
Consider a game with $n$ piles, and move vectors $v_1, v_2, \ldots, v_m$ where $v_1, v_2, \ldots, v_m \in \mathbb Z ^n.$ If the move vectors and are linearly independent, this game is equivalent to $n$ pile Nim.
\end{prop}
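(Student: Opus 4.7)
The plan is to exhibit an explicit game isomorphism via a linear change of coordinates. Since the proposition is meaningful only when $m=n$ (otherwise the $v_i$ cannot span $\mathbb{Z}^n$ and the game is not equivalent to $n$-pile Nim), I read the hypothesis as: $m=n$ and $v_1,\ldots,v_n$ are linearly independent in $\mathbb{Z}^n$. Let $V$ be the $n\times n$ matrix whose columns are $v_1,\ldots,v_n$; by linear independence $V$ is invertible over $\mathbb{Q}$.

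First I would pin down the relevant position set. The only legal moves send $x$ to $x-\gamma v_i$ for $\gamma\in\mathbb{Z}_{>0}$, so the set of positions that can actually terminate at $(0,\ldots,0)$ under repeated play is exactly the cone $\Lambda:=\{\sum_{i=1}^n k_i v_i : k_i \in \mathbb{Z}_{\ge 0}\}$. Restricting the game to $\Lambda$ (the only positions from which the last-move-wins criterion is meaningful), I would define
\[
\Phi:\mathbb{Z}_{\ge 0}^n \longrightarrow \Lambda, \qquad \Phi(k_1,\ldots,k_n)=\sum_{i=1}^n k_i v_i,
\]
and observe that $\Phi$ is a bijection: it is surjective by definition of $\Lambda$, and injective because $V$ has trivial kernel, so the representation of any $x\in\Lambda$ as a nonnegative integer combination of the $v_i$ is unique.

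Next I would verify that $\Phi$ intertwines moves. A Nim move on $(k_1,\ldots,k_n)$ subtracts a positive integer $\gamma$ from some coordinate $k_i$; applying $\Phi$ this becomes subtraction of $\gamma v_i$ from $\Phi(k_1,\ldots,k_n)$, which is exactly a legal move in the $\{v_1,\ldots,v_n\}$ game. Conversely, every legal move in the original game subtracts some $\gamma v_i$ with $\gamma\in\mathbb{Z}_{>0}$, and by the uniqueness of coefficients this pulls back to a decrement of the $i$th Nim pile by $\gamma$. Thus $\Phi$ induces a bijection between legal moves out of $\Phi(k_1,\ldots,k_n)$ and legal moves out of $(k_1,\ldots,k_n)$, and in particular sends the terminal position $(0,\ldots,0)$ to $0\in\Lambda$.

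From this intertwining I would conclude by a straightforward induction on $k_1+\cdots+k_n$ (or directly by invoking Sprague–Grundy) that the game tree rooted at $\Phi(k_1,\ldots,k_n)$ is isomorphic to the Nim game tree rooted at $(k_1,\ldots,k_n)$; in particular P-positions are identified and Grundy values agree. The only genuine obstacle is the one addressed in the first step: the original game is nominally played on all of $\mathbb{Z}_{\ge 0}^n$, whereas $\Phi$ surjects only onto $\Lambda$. One must observe that positions outside $\Lambda$ can never reach $(0,\ldots,0)$ because $V^{-1}x$ would have a nonintegral or negative coordinate that no sequence of legal moves can correct, so they are irrelevant to the claim. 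Once this is acknowledged, the rest is a formal transport of structure through $\Phi$.
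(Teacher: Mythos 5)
Your proposal takes the same route as the paper's proof --- decompose each position uniquely in the basis of move vectors and transport the Nim structure through that coordinate change --- and you are considerably more careful than the paper about what this requires (that $m=n$, that $\Phi$ is a bijection onto the cone $\Lambda$, and that positions outside $\Lambda$ need to be accounted for). However, there is a genuine gap in the step where you claim that $\Phi$ induces a bijection between legal moves out of $(k_1,\dots,k_n)$ and legal moves out of $\Phi(k_1,\dots,k_n)$, and the paper's two-line proof silently makes the same unjustified claim. A legal move in the vector game only has to keep the pile counts nonnegative; it does not have to keep the position inside $\Lambda$. If $x=\sum_j k_j v_j\in\Lambda$ and you subtract $\gamma v_i$ with $\gamma>k_i$, the resulting position can have all coordinates nonnegative while its (unique) coefficient vector acquires a negative entry, so the move does not pull back to any Nim move. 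Your sentence ``by the uniqueness of coefficients this pulls back to a decrement of the $i$th Nim pile by $\gamma$'' is therefore false in general, and the induction that follows collapses because a position in $\Lambda$ can have strictly more options than its Nim counterpart. Relatedly, your dismissal of positions outside $\Lambda$ is not quite right either: the game does not end only at the origin, it ends at any position from which no move is legal, and such terminal positions are P-positions under the paper's last-player-wins convention even when they are nonzero.

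This is not a repairable technicality, because the extra options can change outcomes. Take $n=2$, $v_1=(2,1)$, $v_2=(1,2)$, which are linearly independent. Then $(6,6)=2v_1+2v_2=\Phi(2,2)$, and $(2,2)$ is a P-position of two-pile Nim. But in the vector game the move $3v_1$ takes $(6,6)$ to $(0,3)$, a legal position from which neither $v_1$ nor $v_2$ can be played; $(0,3)$ is terminal, hence a P-position, hence $(6,6)$ is an N-position. So the game is not equivalent to Nim even when restricted to $\Lambda$, and the proposition as stated needs either a stronger hypothesis (e.g., that the $v_i$ are, up to permutation and positive scaling, the standard basis vectors, so that $\Lambda$-membership is automatic) or a redefinition of legality that forbids leaving $\Lambda$. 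Your instinct to isolate $\Lambda$ was the right one; the missing observation --- missing from the paper as well --- is that $\Lambda$ is not closed under legal moves.
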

\begin{proof}
Let $P$ be a position in the game with $P = (p_1, p_2, \ldots, p_n)$.  Then $P$ can be written as a unique canonical decomposition in terms of the move vectors.
\begin{center}
$P = a_1 v_1 + a_2 v_2 + \cdots + a_m v_m$
\end{center}
Playing a move is $k v_i$ is equivalent to reducing $a_i$ by $k$. Moves are played until we reach the $0$ vector, which is the same as playing $m$-pile nim. 
\end{proof}

\section{Further Investigations} \label{sancho}
The cold positions of the $(a,a)$ game seem to asymptotically converge towards the lines with slopes $\alpha$ and $1/\alpha$ where
$\alpha$ is the positive real root of the quadratic $ax^2-x-a$.
A similar result was proved by Fraenkel for a different variation of Wythoff's Game \cite{Fraenkel}. If proven true, this result could help resolve other open Wythoff problems. We also make the following conjectures:
\begin{figure}[h] 
\includegraphics[width=7.3cm,height=7.3cm]{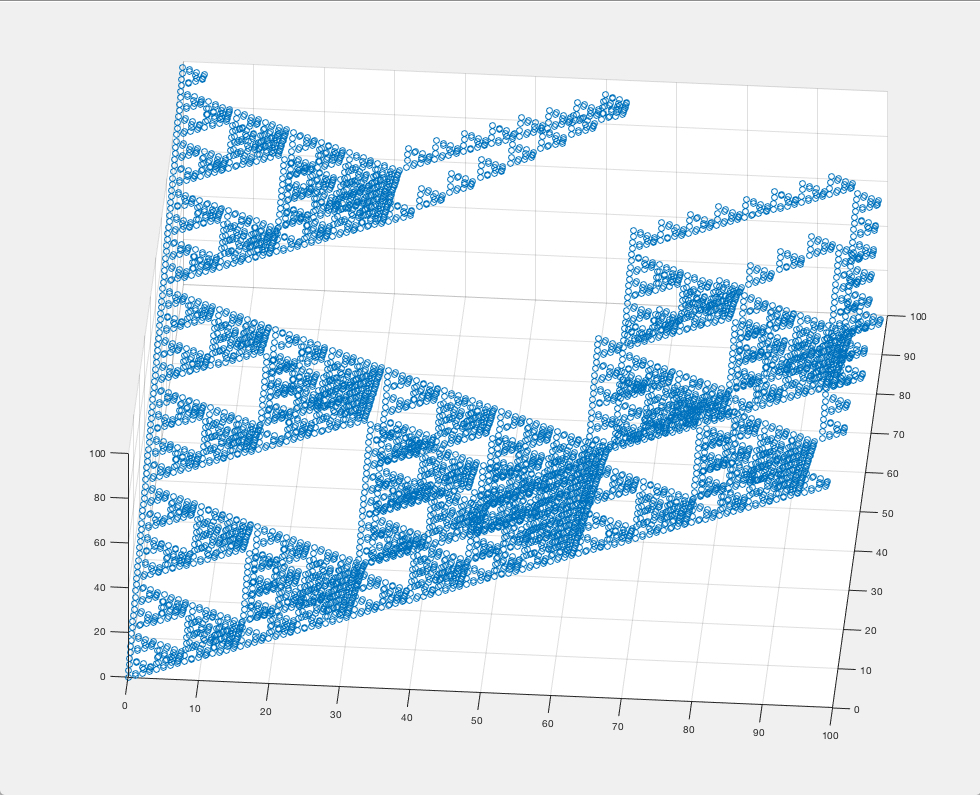}
\hspace{1.5cm}
\includegraphics[width=7.3cm,height=7.3cm]{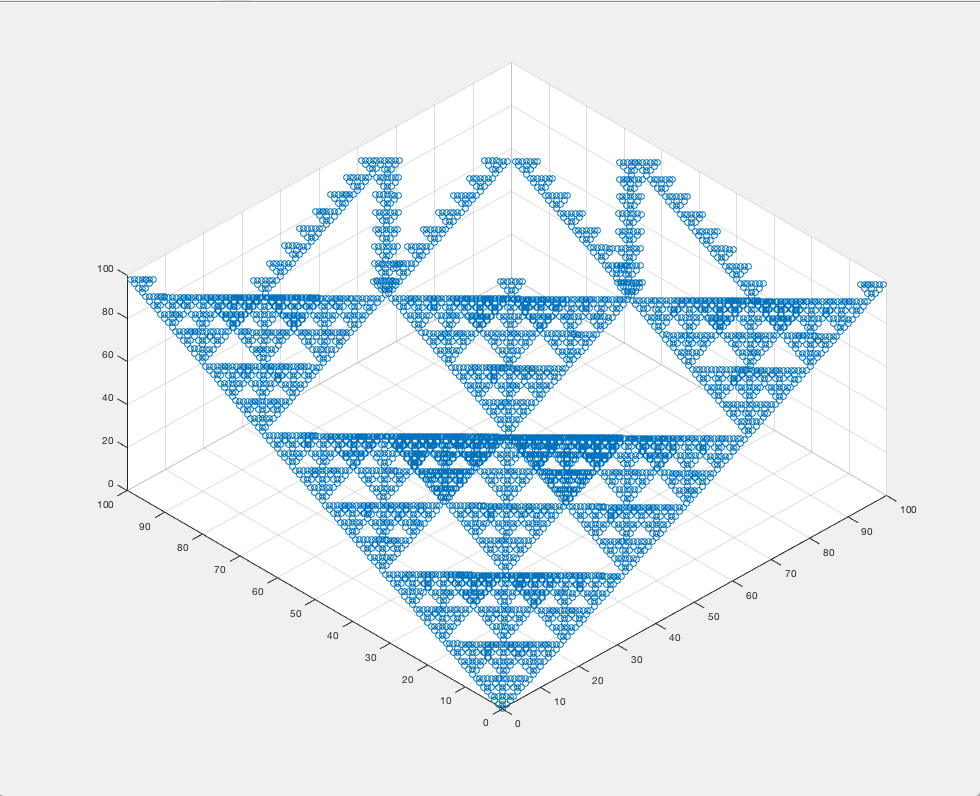} \label{abc}
\caption{Two different angles of the standard Wythoff's Game formed with $3$ piles and the basis $\left\{(1,0,0),(0,1,0),(0,0,1),(1,1,1)\right\}$. Notice that this generates a Sierpinski Sponge. There seem to be clear analogs between this and the $2$-pile game.}
\end{figure}

\begin{itemize}
    \item (Sierpinski Sponge Conjecture) The set of P-Positions of the $3$ dimensional Wythoff's Game (Wythoff's Game with $3$ piles) generates the Sierpinski Sponge as shown above. The move vectors for this game are $\left\{(1,0,0),(0,1,0),(0,0,1),(1,1,1)\right\}$. 
    \item (Grid Conjecture) Let $(x,y)$ be a P-Position in the $(a,a)$ game. We conjecture that there are exactly $a^2$ P-positions satisfying $x,y \in [0, a^2-1]$ in this game. In other words the first $a^2$ P-positions lie within the square grid $0$ to $a^2-1$. 
    \item (Asymptote Conjecture) The asymptotic slope  of the $(a,a)$ game is $\alpha=\frac{\sqrt{4a^2+1}+1}{2a}$, the positive real solution to the quadratic $\alpha^2-\alpha-1=0.$
    \item (Cyclic Games Conjecture) The  $(a,a)$ game is cylic iff $a=2^m$, $m \in \mathbb{Z}^+$. Proving this would show that our solution to the $(a,a)$ game cannot be applied to the general $(b,b)$ game.
\end{itemize}

\section{Acknowledgements}
We would like to extend our thanks to Professor Paul Gunnells (UMass Amherst) for proposing this project, Professor A.S. Fraenkel (Weizmann Institute of Science) for helping direct us to relevant work related to Wythoff's Game Variations, Tim Ratigan for supervising us throughout this research, and the PROMYS Program and Clay Mathematics Institute for sponsoring this research.

\printbibliography
\end{document}